\newtheorem{theorem}{Theorem}
\newtheorem{lemma}{Lemma}
\numberwithin{equation}{section}
\begin{document}

\title{\Large Je\'{s}manowicz' conjecture and Fermat numbers}
\author{\large Min Tang\thanks{Corresponding author. This work was supported by the National Natural Science Foundation of China, Grant
No.10901002 and Anhui Provincial Natural Science Foundation, Grant No.1208085QA02. Email: tmzzz2000@163.com} and Jian-Xin Weng }
\date{} \maketitle
 \vskip -3cm
\begin{center}
\vskip -1cm { \small
\begin{center}
School of Mathematics and Computer Science, Anhui Normal
University,
\end{center}
\begin{center}
Wuhu 241003, China
\end{center}
}
 \end{center}

 {\bf Abstract.} Let $a,b,c$ be relatively prime positive integers such that $a^{2}+b^{2}=c^{2}.$ In 1956, Je\'{s}manowicz conjectured that for any positive integer $n$, the only solution of $(an)^{x}+(bn)^{y}=(cn)^{z}$ in positive integers is $(x,y,z)=(2,2,2)$. Let $k\geq 1$ be an integer and $F_k=2^{2^k}+1$ be a Fermat number. In this paper, we show that Je\'{s}manowicz' conjecture is true for Pythagorean triples $(a,b,c)=(F_k-2,2^{2^{k-1}+1},F_k)$.

{\bf Keywords:} Je\'{s}manowicz' conjecture; Diophantine equation; Fermat numbers

2010 {\it Mathematics Subject Classification}: 11D61

 \section{Introduction} Let $a,b,c$ be relatively prime positive integers such that $a^{2}+b^{2}=c^{2}$ with $2\mid b.$ Clearly, for any positive integer $n$, the Diophantine equation
 \begin{equation}\label{eqn1}(na)^{x}+(nb)^{y}=(nc)^{z}\end{equation}
  has the solution $(x, y, z)=(2,2,2).$ In 1956, Sierpi\'{n}ski \cite{Sierpinski} showed there is no other solution when $n=1$ and $(a,b,c)=(3,4,5)$, and Je\'{s}manowicz \cite{Jesmanowicz} proved that when $n=1$ and $(a,b,c)=(5,12,13),(7,24,25),(9,40,41),(11,60,61),$ Eq.(\ref{eqn1}) has only the solution $(x,y,z)=(2,2,2).$  Moreover, he conjectured that  for any positive integer $n,$ the Eq.(\ref{eqn1}) has no positive integer solution other than $(x,y,z)=(2,2,2).$ Let $k\geq 1$ be an integer and $F_k=2^{2^k}+1$ be a Fermat number.
  Recently, the first author of this paper and Yang \cite{Tang} proved that if $1\leq k\leq 4$, then the Diophantine equation  \begin{equation}\label{eqn2}((F_k-2)n)^{x}+(2^{2^{k-1}+1}n)^{y}=(F_kn)^{z}\end{equation}
  has no positive integer solution other than $(x,y,z)=(2,2,2)$.
  For related problems, see (\cite{Deng}, \cite{Miyazaki}, \cite{Miyazaki2}).

  In this paper, we obtain the following result.
\begin{theorem}\label{thm1} For any positive integer $n$ and Fermat number $F_k$, Eq.(\ref{eqn2}) has only the solution $(x,y,z)=(2,2,2)$.
\end{theorem}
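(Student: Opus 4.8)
The plan is to reduce Theorem~\ref{thm1} to the single statement that every positive solution $(x,y,z)$ of Eq.(\ref{eqn2}) satisfies $x=y=z$, and to finish by a monotonicity argument. Once $x=y=z=t$ is known, cancelling $n^{t}$ in Eq.(\ref{eqn2}) gives $(F_k-2)^{t}+2^{(2^{k-1}+1)t}=F_k^{t}$; dividing by $F_k^{t}$ rewrites this as $\varphi(t)=1$, where $\varphi(t)=\bigl((F_k-2)/F_k\bigr)^{t}+\bigl(2^{2^{k-1}+1}/F_k\bigr)^{t}$. Both bases lie in $(0,1)$, so $\varphi$ is strictly decreasing on $(0,\infty)$; since $(2,2,2)$ is a solution we have $\varphi(2)=1$, and strict monotonicity makes $t=2$ the only root. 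Hence the entire difficulty is concentrated in proving $x=y=z$, and this is where the arithmetic of Fermat numbers enters.

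To exploit the shape of the triple I would set $u=2^{2^{k-1}}$ and $m=2^{k}$, so that $F_k-2=u^{2}-1=2^{m}-1$, $\;2^{2^{k-1}+1}=2u=2^{m/2+1}$ and $F_k=u^{2}+1=2^{m}+1$. Two features drive the proof: the even leg is a \emph{pure} power of $2$, and the two odd members differ by $2$. Because $m=2^{k}$ is itself a power of $2$, the order of $2$ is completely transparent: from $2^{m}\equiv1\pmod{2^{m}-1}$ one gets $\mathrm{ord}_{2^{m}-1}(2)=m$, and from $2^{m}\equiv-1\pmod{2^{m}+1}$, together with the fact that every divisor of $2m=2^{k+1}$ is a power of $2$, one gets $\mathrm{ord}_{2^{m}+1}(2)=2m$. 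Neither computation needs $F_k$ to be prime, which is precisely what makes a uniform argument in $k$ conceivable.

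The mechanism is clearest in the case $\gcd(n,(F_k-2)F_k)=1$. Reducing Eq.(\ref{eqn2}) modulo $F_k-2$ annihilates the first term and, using $F_k\equiv2$, leaves $2^{(m/2+1)y}n^{y}\equiv 2^{z}n^{z}$; reducing modulo $F_k$ annihilates the third term and, using $F_k-2\equiv-2$, leaves $(-1)^{x}2^{x}n^{x}\equiv-2^{(m/2+1)y}n^{y}$. For $n=1$ these become congruences between powers of $2$, so the order computations give $z\equiv(m/2+1)y\pmod{m}$ and, after checking modulo $4$ that $x$ is even, a companion relation $x\equiv(m/2+1)y+m\pmod{2m}$. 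Feeding these linear congruences into the elementary inequalities $(F_k-2)^{x}<F_k^{z}$ and $2^{(2^{k-1}+1)y}<F_k^{z}$ that Eq.(\ref{eqn2}) forces should pin the exponents to $x=y=z=2$.

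The main obstacle is the general modulus $n$, which I would attack by $p$-adic valuations. For the prime $2$ one splits on $v_{2}(n)$: comparing the valuations $x\,v_{2}(n)$, $(m/2+1+v_{2}(n))y$ and $z\,v_{2}(n)$ of the three terms forces $x=z$ in the generic subcase, leaving only the boundary subcase $x\,v_{2}(n)=(m/2+1+v_{2}(n))y$ to be excluded by hand, while odd $n$ is governed by the congruences above. The genuinely delicate point, and the reason the Fermat structure is needed rather than merely convenient, is an odd prime $p$ with $p\mid\gcd(n,F_k)$: for $k\ge5$ the number $F_k$ may be composite, so such a $p$ can occur as a proper divisor, and one must control $v_{p}$ of all three terms at once. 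Here I would invoke the classical Euler--Lucas fact that every prime divisor $p$ of $F_k$ satisfies $p\equiv1\pmod{2^{k+1}}$; this congruence pins down $v_{p}(F_k)$ and the relevant multiplicative orders tightly enough to exclude any solution with $p\mid\gcd(n,F_k)$ other than the diagonal one. I expect this valuation bookkeeping for composite $F_k$ to be the hardest and most technical step of the proof, with everything else reducing to the clean power-of-$2$ congruences described above.
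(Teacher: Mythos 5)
Your plan is not a proof but a strategy sketch, and it has genuine gaps at exactly the points where the difficulty lives. The opening ``reduction'' to showing $x=y=z$ is really a restatement of the theorem rather than progress: once $x=y=z=t$, the monotonicity argument giving $t=2$ is correct but trivial, so all the content is deferred to the congruence/valuation analysis, and that analysis does not close. First, your congruences modulo $F_k-2$ and modulo $F_k$ are only exploitable when $n=1$ (a case already settled by Lu's result, the paper's Lemma 1); for odd $n>1$ coprime to $(F_k-2)F_k$, the relation $2^{(2^{k-1}+1)y}n^{y}\equiv 2^{z}n^{z}\pmod{F_k-2}$ involves unknown residues of $n$, and you give no mechanism for eliminating them, so the order-of-$2$ computations cannot pin down the exponents. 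Second, in the case $v_2(n)>0$ your valuation comparison yields at best $x=z$, plus an untreated boundary identity $xv_2(n)=(2^{k-1}+1+v_2(n))y$; you never explain how $x=z$ alone forces $x=y=z$ or a contradiction, and that step is not trivial (the paper only gets such conclusions by citing Le's lemma, a nontrivial result).

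The most serious omission concerns which primes of $n$ are actually dangerous. You single out $p\mid\gcd(n,F_k)$ as the delicate case, but in the critical configuration that case cannot occur: when $x<z<y$ the equation rearranges to $\bigl(\prod_{i=0}^{k-1}F_i\bigr)^{x}=n^{z-x}\bigl(F_k^{z}-2^{(2^{k-1}+1)y}n^{y-z}\bigr)$, so $n$ divides a power of $F_k-2$ and every prime of $n$ divides $F_k-2=\prod_{i=0}^{k-1}F_i$, hence is coprime to $F_k$. The genuinely hard case is therefore primes of $n$ dividing the \emph{smaller} Fermat numbers $F_i$, $i<k$ --- a case your sketch never addresses; your Euler--Lucas congruence $p\equiv1\pmod{2^{k+1}}$ is aimed at divisors of $F_k$, which is the wrong target. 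For contrast, the paper first invokes known reduction lemmas (Deng--Cohen, Le, Deng) to restrict any putative non-trivial solution to the orderings $y<z<x$ or $x<z<y$; in each ordering the equation itself forces the shape of $n$ (a power of $2$, respectively a product of primes of $F_k-2$), and only then do order-of-$2$ arguments apply: for $y<z<x$ via orders modulo prime factors of $F_{k-1}$, and for $x<z<y$ via the cofactor $P(k,n)$ (the part of $\prod_{i<k}F_i$ coprime to $n$), the congruence $P(k,n)^{x}\equiv1\pmod{2^{2^k}}$, parity of $x$ and $z$, and a final size contradiction from factoring $F_k^{2z_1}-P(k,n)^{2x_1}$. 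Without some such reduction and without the $P(k,n)$ device (or a substitute), your plan leaves the central cases open.
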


Throughout this paper, let $m$ be a positive integer and $a$ be any integer relatively prime to $m$. If $h$ is the
least positive integer such that $a^{h}\equiv 1 \pmod m$, then $h$ is called the order of $a$ modulo $m$, denoted by $\textnormal{ord}_{m}(a)$.

\section{Lemmas}

\begin{lemma}\label{lem1}(\cite{Lu}) For any positive integer $m$, the Diophantine equation $(4m^{2}-1)^{x}+(4m)^{y}=(4m^{2}+1)^{z}$ has only the solution $(x,y,z)=(2,2,2).$\end{lemma}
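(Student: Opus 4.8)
The plan is to treat this as the Je\'{s}manowicz problem for the primitive Pythagorean triple generated by $(s,t)=(2m,1)$, so that $a=4m^{2}-1$, $b=4m$, $c=4m^{2}+1$ satisfy the convenient identities $c-a=2$ and $c+a=8m^{2}$, whence $b^{2}=(c-a)(c+a)=16m^{2}$. I would write a hypothetical solution as $(x,y,z)$ and aim to force $x=y=z=2$. First I would dispose of $m=1$, which is the classical triple $(3,4,5)$ settled by Sierpi\'{n}ski \cite{Sierpinski}, and assume $m\geq 2$. The opening move is a parity reduction: reducing $(4m^{2}-1)^{x}+(4m)^{y}=(4m^{2}+1)^{z}$ modulo $4m$ gives $(-1)^{x}\equiv 1\pmod{4m}$, since $b^{y}\equiv 0$ and $c\equiv 1\pmod{4m}$, so $x$ is even; write $x=2x_{1}$.

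Next I would show $z$ is even, splitting on the parity of $m$. When $m$ is odd and $y\geq 2$, reducing modulo $16$ annihilates $b^{y}$ and leaves $a^{x}\equiv c^{z}\pmod{16}$; since $a^{2}\equiv 9$ and $c\equiv 5\pmod{16}$, the powers $a^{x}\in\{1,9\}$ meet the cycle of $c^{z}$ only when $z\equiv 0,2\pmod 4$, forcing $z$ even. When $m$ is even I would instead compare $2$-adic valuations: lifting the exponent yields $v_{2}(c^{z}-1)=v_{2}(c-1)+v_{2}(z)=2+2v_{2}(m)+v_{2}(z)$ and $v_{2}(a^{x}-1)=v_{2}(a^{2}-1)+v_{2}(x_{1})=3+2v_{2}(m)+v_{2}(x_{1})$. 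Since the second valuation strictly exceeds the first, if $z$ were odd then $v_{2}(c^{z}-a^{x})=2+2v_{2}(m)$, whereas $v_{2}(b^{y})=y\,(2+v_{2}(m))$; equating these forces $y=\frac{2+2v_{2}(m)}{2+v_{2}(m)}\in(1,2)$, impossible. Hence $z$ is even, $z=2z_{1}$.

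With both $x,z$ even I would factor
\[
(4m)^{y}=c^{2z_{1}}-a^{2x_{1}}=\bigl(c^{z_{1}}-a^{x_{1}}\bigr)\bigl(c^{z_{1}}+a^{x_{1}}\bigr).
\]
Putting $U=c^{z_{1}}-a^{x_{1}}$ and $V=c^{z_{1}}+a^{x_{1}}$, one has $V-U=2a^{x_{1}}$, $V+U=2c^{z_{1}}$, and $\gcd(U,V)=2$, since $\gcd(a,c)=1$ and, using $c\equiv 1$, $a\equiv -1\pmod 4$, exactly one of $U,V$ is $\equiv 2\pmod 4$. As $UV=(4m)^{y}$, every odd prime factor of $U$ or of $V$ divides $m$, while $c=4m^{2}+1$ is coprime to $m$, so the relation $V+U=2c^{z_{1}}$ already severely restricts how the prime factors of $(4m)^{y}$ can be apportioned. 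The remaining task is to combine this with $V-U=2a^{x_{1}}$ and with the $2$-adic split $\{U,V\}=\{2P,\,2^{e}Q\}$, where $P,Q$ are odd and coprime, $PQ$ is the odd part of $(4m)^{y}$, and $e=v_{2}((4m)^{y})-1$, to conclude $x_{1}=z_{1}=1$ and $y=2$, i.e.\ $(x,y,z)=(2,2,2)$.

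The main obstacle is exactly this last step: converting the shape of the factors into a bound on the exponents. The key inequality is that, when $z_{1}\geq x_{1}$, the identity $c-a=2$ gives $c^{z_{1}}-a^{x_{1}}\geq c^{x_{1}}-a^{x_{1}}=2\,(c^{x_{1}-1}+\dots+a^{x_{1}-1})\geq 2x_{1}a^{x_{1}-1}$, which pushes the odd part of $U$ to be large and collides with the requirement that it divide $m^{y}$; the symmetric regime $z_{1}<x_{1}$ is treated by the companion estimate on $V$, forcing $x_{1}=z_{1}=1$ and then $y=2$. I expect two residual edge cases to need separate elementary handling: the case $y=1$ with $m$ odd, where the valuation comparison degenerates (here I would reduce modulo $8m$ to force $z$ odd and then bound $(4m^{2}+1)^{z}-(4m^{2}-1)^{x}=4m$ directly, the case $z=1$ reducing to $(4m^{2}-1)^{x}=(2m-1)^{2}$, which is impossible for $x\geq 1$), and the base case $m=1$, which is absorbed into Sierpi\'{n}ski's theorem.
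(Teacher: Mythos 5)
The paper itself contains no proof of this lemma---it is imported wholesale from Lu's 1959 paper---so your attempt can only be measured against a complete argument for Lu's theorem. Your opening reductions are correct and essentially the standard ones: reducing mod $4m$ does force $x$ even; for odd $m$ and $y\geq 2$ the mod $16$ computation correctly forces $z$ even; and for even $m$ your $2$-adic lifting-the-exponent comparison is valid ($c\equiv 1\pmod 4$, so $v_{2}(c^{z}-1)=v_{2}(c-1)+v_{2}(z)$, and $v_{2}(a^{2x_{1}}-1)=v_{2}(a^{2}-1)+v_{2}(x_{1})$ are both legitimate), yielding the impossible $y=(2+2v_{2}(m))/(2+v_{2}(m))\in(1,2)$ when $z$ is odd. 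The factorization $(4m)^{y}=UV$ with $\gcd(U,V)=2$ is also right. But the step you yourself flag as the main obstacle is a genuine gap, and the inequality you propose to close it with does not work: from $U=c^{z_{1}}-a^{x_{1}}\geq 2x_{1}a^{x_{1}-1}$ you cannot conclude that the \emph{odd part} of $U$ is large, because $v_{2}(U)$ can be as large as $(2+v_{2}(m))y-1$; one of the two factors is allowed to be of size roughly $(4m)^{y}/2$ with odd part as small as $1$, so "$U$ large" collides with nothing. Size estimates alone cannot finish this proof.

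The missing idea is a congruence, not an inequality, and you had all its ingredients on the table. Since $a\equiv-1$ and $c\equiv 1\pmod m$, every odd prime $p$ dividing $U$ or $V$ divides $m$, and modulo such a $p$ one has $c^{z_{1}}\mp a^{x_{1}}\equiv 1\mp(\pm1)\in\{0,2\}$. Exactly one of $U,V$ is $\equiv 2\pmod 4$; if an odd prime $p\mid m$ divided that factor, it would be simultaneously $\equiv 0$ and $\equiv 2\pmod p$, impossible, so that factor has odd part $1$, i.e.\ equals $2$. When $x_{1}$ is even this factor is $V=c^{z_{1}}+a^{x_{1}}>2$, a contradiction; when $x_{1}$ is odd it gives $c^{z_{1}}-a^{x_{1}}=2$, hence $c^{z_{1}}-1=(4m)^{y}/4$, and now a short valuation argument ($v_{2}(c^{z_{1}}-1)=2+2v_{2}(m)+v_{2}(z_{1})$ together with $v_{p}(c^{z_{1}}-1)=2v_{p}(m)+v_{p}(z_{1})$ for odd $p\mid m$ forces $z_{1}\geq(4m)^{y-2}$, incompatible in size with $c^{z_{1}}=4^{y-1}m^{y}+1$ unless $y=2$, $z_{1}=1$, whence $x_{1}=1$) pins down $(x,y,z)=(2,2,2)$. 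Your sketch contains neither this congruence step nor any substitute for it, so the conclusion $x_{1}=z_{1}=1$, $y=2$ is asserted rather than proved. Separately, your $y=1$ patch is incomplete: "bound $(4m^{2}+1)^{z}-(4m^{2}-1)^{x}=4m$ directly" handles $z=1$ but not odd $z\geq 3$, where nothing in a pure size bound prevents $a^{x}$ from sitting within $4m$ of $c^{z}$; the clean fix is to reduce mod $c-1=4m^{2}$ (with $x$ even, $a^{x}\equiv 1$ and $c^{z}\equiv 1$), which forces $4m^{2}\mid(4m)^{y}$ and kills $y=1$ for all $m\geq 2$ in one line.
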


\begin{lemma}\label{lem2}(See \cite[Lemma 2]{Deng}) If $z\geq max\{x,y\},$ then the Diophantine equation $a^{x}+b^{y}=c^{z},$ where $a,b$ and $c$ are any positive integers (not necessarily relative prime) such that $a^{2}+b^{2}=c^{2}$, has no solution other than  $(x,y,z)=(2,2,2).$\end{lemma}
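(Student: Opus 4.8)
The plan is to prove the statement by a pure size comparison, splitting into the ranges $z\ge 3$, $z=2$, and $z=1$. A point worth stressing at the outset is that this argument never uses that $a,b,c$ are pairwise coprime; it relies only on $a^2+b^2=c^2$ with $a,b,c$ positive. This is precisely what makes the lemma applicable to the scaled triples $(an,bn,cn)$ in Eq.~(\ref{eqn2}), where the common factor $n$ destroys coprimality.

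The heart of the matter is the case $z\ge 3$. Since $a^2+b^2=c^2$ with $a,b>0$, we have $0<a<c$ and $0<b<c$, so the ratios $a/c$ and $b/c$ lie strictly in $(0,1)$. Because raising a number in $(0,1)$ to a larger exponent only decreases it, for every exponent $z>2$ I would obtain
\[
\left(\frac{a}{c}\right)^{z}+\left(\frac{b}{c}\right)^{z}<\left(\frac{a}{c}\right)^{2}+\left(\frac{b}{c}\right)^{2}=1,
\]
i.e. $a^{z}+b^{z}<c^{z}$. On the other hand, $z\ge\max\{x,y\}$ together with $a,b\ge 1$ gives $a^{x}\le a^{z}$ and $b^{y}\le b^{z}$, hence $a^{x}+b^{y}\le a^{z}+b^{z}<c^{z}$. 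This contradicts $a^{x}+b^{y}=c^{z}$, so there is no solution with $z\ge 3$.

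It then remains to settle $z\le 2$. For $z=1$ the hypothesis forces $x=y=1$; but $(a+b)^2=c^2+2ab>c^2$ shows $a+b>c$, contradicting $a+b=c$. For $z=2$ we have $x,y\le 2$, and I would rewrite the equation as $(a^{2}-a^{x})+(b^{2}-b^{y})=0$. Since $x\le 2$ and $a\ge 1$, the bracket $a^{2}-a^{x}$ is nonnegative, and likewise $b^{2}-b^{y}\ge 0$; a sum of nonnegative terms vanishing forces $a^{x}=a^{2}$ and $b^{y}=b^{2}$. Thus $x=2$ or $a=1$, and $y=2$ or $b=1$. The one genuine case-check is to eliminate $a=1$ and $b=1$: in a Pythagorean triple $a=1$ would give $c^{2}-b^{2}=1$, impossible for positive $b$, so $a\ge 2$ and by symmetry $b\ge 2$. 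Hence $x=y=2$, yielding $(x,y,z)=(2,2,2)$.

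I expect the only place requiring real care to be the boundary case $z=2$, where the strict inequality driving the $z\ge 3$ argument degenerates to the equality $a^2+b^2=c^2$ and a short case analysis is unavoidable; the rest follows directly from the elementary monotonicity inequality above.
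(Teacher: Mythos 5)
Your proof is correct, and it follows essentially the same route as the source: the paper does not prove this lemma itself but cites Deng and Cohen, whose argument is precisely this elementary size comparison using $a,b<c$ and monotonicity in the exponents, with the boundary cases $z\le 2$ checked directly. Your handling of $z=2$ (ruling out $a=1$ and $b=1$ via $(c-b)(c+b)=1$) and of $z=1$ is complete, so there is nothing to add.
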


\begin{lemma}\label{lem3} (See \cite[Corollary 1]{Le}) If the Diophantine equation $(na)^{x}+(nb)^{y}=(nc)^{z}$(with $a^2+b^2=c^2$) has a solution $(x,y,z)\neq(2,2,2),$
then $x,y,z$ are distinct.\end{lemma}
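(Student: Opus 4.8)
The plan is to argue by contradiction: given a solution $(x,y,z)\neq(2,2,2)$, I show that no two of $x,y,z$ coincide. First I normalize. Since the triple is primitive I take $a$ odd, $b$ even (so in fact $4\mid b$), $c$ odd, and I record the classical parametrization $a=s^2-t^2$, $b=2st$, $c=s^2+t^2$ with $\gcd(s,t)=1$, $s\not\equiv t\pmod2$, which yields the factorizations $c-a=2t^2$, $c+a=2s^2$, $c-b=(s-t)^2$, $c+b=(s+t)^2$. The key preliminary reduction is Lemma \ref{lem2}: any nontrivial solution satisfies $z<\max\{x,y\}$. Hence if two exponents were equal we would be in exactly one of three cases: (I) $x=z$, which forces $y>x=z$; (II) $y=z$, which forces $x>y=z$; (III) $x=y>z$. (The all-equal case is impossible, since it would make $z=\max$.) I eliminate each.

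Case (I) I expect to settle purely $2$-adically. Here $(nb)^y=n^x(c^x-a^x)$, and comparing $2$-adic valuations gives $(y-x)v_2(n)+y\,v_2(b)=v_2(c^x-a^x)$. The left side is at least $y\,v_2(b)\ge 2y$, while lifting the exponent evaluates the right side as $v_2(c-a)$ when $x$ is odd and as $v_2(c-a)+v_2(c+a)+v_2(x)-1$ when $x$ is even. Substituting $c\mp a=2t^2,2s^2$ and $v_2(b)=1+v_2(st)$, the odd-$x$ subcase is immediately inconsistent; the even-$x$ subcase gives $(y-2)v_2(b)\le v_2(x)-1$, whence $v_2(x)\ge v_2(b)+1$ (so $x\ge8$) while $y$ is bounded by a slowly growing function of $x$ via $v_2(x)\le\log_2 x$, contradicting $y>x$.

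Cases (II) and (III) are the heart of the matter, and I expect the odd-prime analysis there to be the main obstacle: in both, $b$ sits inside a sum or difference with an odd quantity, so the relevant side is odd and $2$-adic valuations degenerate, yielding only that $n$ is odd. This reduces (II) to $n^{x-y}a^x=c^y-b^y$ and (III) to $n^{x-z}(a^x+b^x)=c^z$. For (II), I choose an odd prime $p\mid a$; as $a=(s-t)(s+t)$ with $\gcd(s-t,s+t)=1$, $p$ divides exactly one factor, so $p\mid c-b$ or $p\mid c+b$, and lifting the exponent evaluates $v_p(c^y-b^y)$ exactly. Comparing with $v_p(\mathrm{LHS})=(x-y)v_p(n)+x\,v_p(a)$ forces $(x-2)v_p(a)\le v_p(y)$, hence $y\ge p^{\,x-2}\ge 3^{\,x-2}$ (with a short parity analysis when $p\mid c+b$), which contradicts $y<x$ for every $x\ge3$; the residual case $(x,y,z)=(2,1,1)$ is eliminated by direct substitution, as it reduces to $n(s+t)^2=1$.

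For Case (III) the crucial input is an exact valuation formula for the sum of powers. I first note that $a^x+b^x\mid c^z$, so $a^x+b^x$ is supported only on primes dividing $c$. For such a prime $p$ one has $p\equiv1\pmod4$ and $a^2\equiv -b^2\pmod{p^{2v_p(c)}}$, and applying lifting the exponent to $(a^2)^{x'}+(b^2)^{x'}$ when $x=2x'$ with $x'$ odd shows $v_p(a^x+b^x)=0$ unless $x\equiv2\pmod4$, in which case $v_p(a^x+b^x)=2v_p(c)+v_p(x)$. If $x\not\equiv2\pmod4$, then $a^x+b^x$ is coprime to $c$ yet divides $c^z$, forcing $a^x+b^x=1$, which is absurd. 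If $x\equiv2\pmod4$, the formula gives $a^x+b^x=c^2\prod_{p\mid c}p^{v_p(x)}\le c^2x$; but $x\equiv2\pmod4$ together with $x>z\ge2$ forces $x\ge6$ (the value $x=2$ gives $nc=1$), and then the elementary bound $a^x+b^x\ge 2^{1-x/2}c^x$ contradicts $a^x+b^x\le c^2x$. This disposes of all three cases and completes the proof.
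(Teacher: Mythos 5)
Your proof is correct, but there is nothing in the paper to compare it against step by step: the paper does not prove Lemma \ref{lem3} at all, it simply imports it as Corollary 1 of Le's note, just as it imports Lemma \ref{lem2} from Deng and Cohen. What you have written is therefore an independent, self-contained replacement for the citation, resting only on Lemma \ref{lem2} and elementary tools (the parametrization $a=s^{2}-t^{2}$, $b=2st$, $c=s^{2}+t^{2}$ with the factorizations $c\mp a=2t^{2},2s^{2}$ and $c\mp b=(s\mp t)^{2}$, plus lifting-the-exponent valuations). I checked all three cases and each is sound: your reduction via Lemma \ref{lem2} to the cases $y>x=z$, $x>y=z$, $x=y>z$ is complete; in Case (I) the inequality $(y-2)v_2(b)\le v_2(x)-1$ combined with $v_2(b)\ge 2$, $v_2(x)\le \log_2 x$ and $y>x$ is indeed contradictory, and the odd-$x$ subcase dies for both parities of $st$; in Case (II) the key identity $v_p(c\mp b)=2v_p(a)$ for an odd prime $p\mid a$ (valid because $p$ divides exactly one of $s-t$, $s+t$) gives $(x-2)v_p(a)\le v_p(y)$, hence $y\ge 3^{x-2}\ge x>y$ for $x\ge 3$, while $(x,y,z)=(2,1,1)$ reduces to $n(s+t)^{2}=1$ as you say; in Case (III) the element $ab^{-1}$ of order $4$ modulo each $p\mid c$ correctly shows $\gcd(a^{x}+b^{x},c)=1$ unless $x\equiv 2\pmod 4$, and the exact evaluation $a^{x}+b^{x}=c^{2}\prod_{p\mid c}p^{v_p(x)}\le c^{2}x$ against the convexity bound $a^{x}+b^{x}\ge 2^{1-x/2}c^{x}$ finishes $x\ge 6$. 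What your route buys is that the chain Lemma \ref{lem2} $\Rightarrow$ Lemma \ref{lem3} $\Rightarrow$ Lemma \ref{lem5} becomes verifiable within the paper itself, at the cost of about a page of case analysis that the citation to Le obtains for free.

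Two small points, neither a gap. In Case (III) you assert ``$x>z\ge 2$,'' but $z\ge 2$ is not known there; what actually forces $x\ge 6$ is your parenthetical elimination of $x=2$ (which gives $z=1$ and $nc=1$), so the argument stands as written once that sentence is rephrased. Also, the vanishing $v_p(a^{x}+b^{x})=0$ for $x\not\equiv 2\pmod 4$ is the order-$4$ argument rather than lifting the exponent proper; and your normalization $2\mid b$ agrees with the paper's standing convention, with the symmetry $(a,x)\leftrightarrow(b,y)$ covering the opposite labelling, so no generality is lost.
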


\begin{lemma}\label{lem4}(See \cite[Lemma 2.3]{Deng2013}) Let $a,b,c$ be any primitive Pythagorean triple such that the Diophantine equation $a^{x}+b^{y}=c^{z}$ has the only positive integer solution $(x,y,z)=(2,2,2)$. Then (\ref{eqn1}) has no positive integer solution satisfying $x>y>z$ or $y>x>z$.
 \end{lemma}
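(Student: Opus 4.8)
The plan is to argue by contradiction, assuming that (\ref{eqn1}) has a positive integer solution $(x,y,z)$ with $x>y>z$; the case $y>x>z$ will be entirely analogous, with the roles of the two legs interchanged (bearing in mind that $b$ is the even leg and $a$ the odd one). First I would dispose of $n=1$: in that case (\ref{eqn1}) is exactly $a^{x}+b^{y}=c^{z}$, which by hypothesis has only the solution $(x,y,z)=(2,2,2)$, contradicting $x>y>z$. So from now on assume $n>1$, and note that by Lemma~\ref{lem3} the exponents $x,y,z$ are in any case distinct, so the two orderings in the statement are genuinely the ``$z$ is the strict minimum'' cases.

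The first substantive step exploits that $z$ is the strict minimum. Writing (\ref{eqn1}) as $n^{x}a^{x}+n^{y}b^{y}=n^{z}c^{z}$ and cancelling $n^{z}$ gives
\begin{equation*} n^{x-z}a^{x}+n^{y-z}b^{y}=c^{z}, \end{equation*}
with $x-z\ge 1$ and $y-z\ge 1$. Hence every prime $p\mid n$ divides $c^{z}$, so $\operatorname{rad}(n)\mid c$; since the triple is primitive we have $\gcd(c,a)=\gcd(c,b)=1$, whence $\gcd(n,ab)=1$, and because $b$ is even this also forces $n$ to be odd. Factoring out the smaller power $n^{y-z}$ (here $y$ is the middle exponent, so $y-z<x-z$) yields $n^{y-z}\bigl(n^{x-y}a^{x}+b^{y}\bigr)=c^{z}$, and since $\gcd\!\bigl(n^{x-y}a^{x}+b^{y},\,n\bigr)=\gcd(b^{y},n)=1$, the entire $n$-part of $c^{z}$ must be carried by $n^{y-z}$. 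Writing $c=c_{0}c_{1}$ with $c_{0}$ the $n$-part of $c$ and $\gcd(c_{1},n)=1$, I would split the identity into
\begin{equation*} n^{y-z}=c_{0}^{\,z} \qquad\text{and}\qquad n^{x-y}a^{x}+b^{y}=c_{1}^{\,z}, \end{equation*}
where $x-y\ge 1$, $c_{0}\ge\operatorname{rad}(n)\ge 3$, and $c_{1}=c/c_{0}<c$. Comparing $p$-adic valuations in the first identity gives the rigid relation $(y-z)\,v_{p}(n)=z\,v_{p}(c)$ for every $p\mid n$.

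The crux is then to rule out the reduced equation $n^{x-y}a^{x}+b^{y}=c_{1}^{\,z}$. For $z=1$ a size estimate closes it immediately, even at the level of the full equation: since $x\ge 3$ and $y\ge 2$ one has $(na)^{x}+(nb)^{y}\ge (na)^{2}+(nb)^{2}=(nc)^{2}>nc$, contradicting $(nc)^{z}=nc$. For larger $z$ the inequality degrades (the leading term $(na)^{x}$ need only satisfy $x\ge z+2$, which is too weak once $z$ grows), so I would instead combine the valuation relation $(y-z)v_{p}(n)=z\,v_{p}(c)$ with congruences modulo $8$ and modulo the odd leg $a$, using $a^{2}+b^{2}=c^{2}$ (equivalently $b^{2}\equiv c^{2}$ to the relevant moduli) to constrain the parities of $x,y,z$ and the residues of $a$ and $c_{1}$, and then feed the resulting restricted system back into the hypothesis that $a^{x}+b^{y}=c^{z}$ has only $(2,2,2)$. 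I expect this last step to be the main obstacle: the reduced equation is not itself a scaled Pythagorean equation, so neither Lemma~\ref{lem2} nor the uniqueness hypothesis applies to it directly, and no elementary inequality suffices for large $z$. The real content is to show that the identity $n^{y-z}=c_{0}^{\,z}$ is extremely restrictive—it forces $n^{y-z}$ to be a perfect $z$-th power and ties $z$ to $y-z$—so that together with the mod-$8$ parity information it collapses the admissible exponents to $(2,2,2)$, contradicting $x>y>z$. The symmetric case $y>x>z$ is handled by the same factorisation, now splitting off $n^{x-z}$ and analysing $a^{x}+n^{y-x}b^{y}=c_{1}^{\,z}$, the only difference being the bookkeeping forced by $a$ odd and $b$ even.
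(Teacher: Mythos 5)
The paper gives no proof of Lemma \ref{lem4} at all: it is quoted from Deng \cite[Lemma 2.3]{Deng2013}. Your attempt therefore has to stand as a free-standing proof, and it does not. What you do establish is correct as far as it goes: for a solution with $x>y>z$ and $n>1$, every prime of $n$ divides $c$, hence $\gcd(n,2ab)=1$ and $n$ is odd; factoring $n^{y-z}$ out of $n^{x-z}a^{x}+n^{y-z}b^{y}=c^{z}$ and noting $\gcd(n^{x-y}a^{x}+b^{y},n)=1$ legitimately splits the equation into $n^{y-z}=c_0^{\,z}$ and $n^{x-y}a^{x}+b^{y}=c_1^{\,z}$ with $c=c_0c_1$, $\gcd(c_0,c_1)=1$, $c_0>1$; the valuation relation $(y-z)v_p(n)=z\,v_p(c)$ is right; and the case $z=1$ is genuinely closed by $(na)^{x}+(nb)^{y}\ge(na)^{2}+(nb)^{2}=(nc)^{2}>nc$.

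The genuine gap is the entire case $z\ge 2$, which is the actual content of the lemma and which you yourself flag as ``the main obstacle'': from that point on you offer only a plan (congruences modulo $8$ and modulo $a$, then ``feed the resulting restricted system back into the hypothesis''), not an argument. Moreover, the plan as stated has no mechanism for invoking the hypothesis at all, as you half-concede: the uniqueness assumption concerns equations of the exact shape $a^{X}+b^{Y}=c^{Z}$ with the full hypotenuse $c$, whereas your reduced equation $n^{x-y}a^{x}+b^{y}=c_1^{\,z}$ carries the extraneous coefficient $n^{x-y}$ and only the proper divisor $c_1$ of $c$; no congruence modulo $8$ or modulo $a$ converts the latter into an instance of the former. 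Nor is the identity $n^{y-z}=c_0^{\,z}$ as ``extremely restrictive'' as you hope: it only forces $n$ and $c_0$ to be powers of a common integer, and it places no bound on $z$. Concretely, nothing in the toolkit you list excludes a configuration such as $(x,y,z)=(5,4,2)$ with $n=c_0$ and $na^{5}+b^{4}=c_1^{2}$: the valuation relation is satisfied identically, $n$ odd and coprime to $ab$ is consistent, and the mod-$8$ residues of $a$, $c_1$ are not pinned down by anything you derived. Ruling such configurations out requires a new idea --- precisely the one Deng's proof of \cite[Lemma 2.3]{Deng2013} supplies and your sketch stops short of --- so the lemma remains unproved for every $z\ge 2$.
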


\begin{lemma}\label{lem5}Let $k$ be a positive integer and $F_k=2^{2^k}+1$ be a Fermat number. If $(x,y,z)$ is a solution of the Eq.(\ref{eqn2}) with $(x,y,z)\neq (2,2,2)$, then $x<z<y$.
\end{lemma}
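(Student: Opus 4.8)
The plan is to use the earlier lemmas to reduce the six possible orderings of $(x,y,z)$ down to two, and then to kill the one bad remaining ordering by a $2$-adic analysis followed by a primitive-prime-divisor argument. Assume $(x,y,z)\neq(2,2,2)$; by Lemma~\ref{lem3} the exponents are pairwise distinct. Writing $a=F_k-2=2^{2^k}-1$, $b=2^{2^{k-1}+1}$, $c=F_k=2^{2^k}+1$, one checks that $(a,b,c)$ is precisely the primitive triple of Lemma~\ref{lem1} with $m=2^{2^{k-1}-1}$ (so $4m=b$, $4m^{2}-1=a$, $4m^{2}+1=c$); hence the case $n=1$ has only the solution $(2,2,2)$, and from now on $n\geq 2$. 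Lemma~\ref{lem2} excludes every ordering with $z=\max\{x,y,z\}$, namely $x<y<z$ and $y<x<z$, while Lemma~\ref{lem4} excludes $x>y>z$ and $y>x>z$. Only $x<z<y$ and $y<z<x$ survive, so it suffices to show that $y<z<x$ is impossible.

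So suppose $y<z<x$. Since $y$ is the strict minimum, dividing Eq.~(\ref{eqn2}) by $n^{y}$ gives $a^{x}n^{x-y}+b^{y}=c^{z}n^{z-y}$ with $x-y\geq 1$ and $z-y\geq 1$; reducing modulo $n$ forces $n\mid b^{y}=2^{(2^{k-1}+1)y}$, so $n=2^{\gamma}$ is a power of $2$ with $\gamma\geq 1$. Comparing $2$-adic valuations in $a^{x}2^{\gamma x}+2^{(2^{k-1}+1+\gamma)y}=c^{z}2^{\gamma z}$ and using that $a,c$ are odd and $x>z$, the first term has valuation $\gamma x$ strictly larger than the valuation $\gamma z$ of the right-hand side, which forces the middle term to have valuation $\gamma z$ as well. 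Dividing through by $2^{\gamma z}$ collapses everything to
\[ c^{z}-1=a^{x}\,2^{\gamma(x-z)}. \]
Reducing modulo $a$ and using $c\equiv 2\pmod a$ together with $\mathrm{ord}_{a}(2)=2^{k}$ yields $2^{k}\mid z$, so in particular $z\geq 2^{k}$, and the lifting-the-exponent formula (valid since $c\equiv 1\pmod 4$) gives $\gamma(x-z)=v_{2}(c^{z}-1)=2^{k}+v_{2}(z)$.

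The crux is now to contradict the assertion that the odd part of $c^{z}-1$ equals $a^{x}$ while $x>z$, and I would split on the size of $z$. If $z=2^{k}$, a direct size estimate suffices: since $\bigl(\tfrac{c}{a}\bigr)^{2^{k}}=\bigl(1+\tfrac{2}{2^{2^{k}}-1}\bigr)^{2^{k}}<2^{2^{k}+k}$ for every $k\geq 1$, one gets $c^{2^{k}}-1<a^{2^{k}}2^{2^{k}+k}$, so the odd part of $c^{z}-1$ is smaller than $a^{z}$ and cannot equal $a^{x}$ with $x>z$. If instead $z>2^{k}$, I would invoke Zsygmondy's theorem: for $z\geq 2$ the number $c^{z}-1$ has a primitive prime divisor $q$, necessarily odd, with $\mathrm{ord}_{q}(c)=z$; were $q\mid a$ then $c\equiv 2\pmod q$ would give $\mathrm{ord}_{q}(c)=\mathrm{ord}_{q}(2)\mid 2^{k}$, forcing $z\mid 2^{k}$ and contradicting $z>2^{k}$, so $q\nmid a$ and the odd part of $c^{z}-1$ has a prime factor outside $a$. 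Either way $y<z<x$ is impossible, which proves the lemma.

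The main obstacle is this final step. The reduction to $c^{z}-1=a^{x}2^{\gamma(x-z)}$ is routine bookkeeping with $2$-adic valuations and congruences, and a purely size-based comparison only yields a (huge) lower bound on $z$ rather than a contradiction; turning ``the odd part of $c^{z}-1$ is a pure power of $a$'' into an impossibility genuinely needs the extra arithmetic input of a primitive-prime-divisor theorem (or an equivalent order computation), and the borderline value $z=2^{k}$, where such divisors may legitimately divide $a$, must be separated out and handled by the explicit size estimate above.
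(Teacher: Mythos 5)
Your proof is correct, and up to the key equation it runs parallel to the paper's: the same use of Lemmas~\ref{lem1}--\ref{lem4} to reduce to the ordering $y<z<x$ with $n\geq 2$, the same observation that $n$ must then be a power of $2$, and the same $2$-adic bookkeeping leading to $F_k^{z}-(F_k-2)^{x}2^{r(x-z)}=1$. From there the two arguments genuinely diverge. The paper stays elementary: it gets $2\mid z$ by reducing modulo $3$, factors $F_k^{z}-1=(F_k^{z_1}-1)(F_k^{z_1}+1)$, and uses the fact that every prime $p$ dividing $F_{k-1}$ satisfies $\mathrm{ord}_{p}(2)=2^{k}$ to force the whole of $F_{k-1}^{x}$ into a single factor $F_k^{z_1}\mp 1$, which is impossible by size since $x>2z_1$. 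You instead extract the stronger congruence $2^{k}\mid z$ from $\mathrm{ord}_{F_k-2}(2)=2^{k}$, pin down the exact $2$-part by lifting the exponent, and finish with Zsygmondy: once $z>2^{k}$, a primitive prime divisor of $F_k^{z}-1$ is odd and cannot divide $F_k-2$, yet every prime factor of $a^{x}2^{r(x-z)}$ does; the borderline case $z=2^{k}$, where a primitive divisor could legitimately divide $F_k-2$, falls to your size estimate. Your route avoids the factorization $2^{2^k}-1=F_0F_1\cdots F_{k-1}$ and its order facts entirely and would generalize to other triples of the shape $(c-2,b,c)$, at the cost of invoking a nontrivial external theorem where the paper needs only elementary order computations. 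One small caveat: Zsygmondy's theorem for $c^{z}-1$ has exceptions ($z=2$ with $c+1$ a power of two, and $(c,z)=(2,6)$), which you should explicitly rule out; this is harmless here, since in your branch $z$ is a multiple of $2^k$ exceeding $2^k$, so $z\geq 2^{k+1}\geq 4$, $c=F_k\geq 5$, and in any case $c+1=2(2^{2^{k}-1}+1)$ is not a power of two.
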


\begin{proof} By Lemmas \ref{lem2}-\ref{lem4}, it is sufficient to prove that the Eq.(\ref{eqn2}) has no solution $(x,y,z)$ satisfying $y<z<x$.
By Lemma \ref{lem1}, we may suppose that $n\geq2$ and the Eq.(\ref{eqn2}) has a solution $(x,y,z)$ with $y<z<x$.
Then we have
 \begin{equation}\label{eqn9}2^{(2^{k-1}+1)y}=n^{z-y}\Big(F_k^{z}-(F_k-2)^{x}n^{x-z}\Big).\end{equation}
  By \eqref{eqn9} we may write $n=2^{r}$ with $r\geq1$.
  Noting that  $$\gcd\Big(F_k^{z}-(F_k-2)^{x}2^{r(x-z)},2\Big)=1,$$
  we have \begin{equation}\label{eqn10}F_k^{z}-(F_k-2)^{x}2^{r(x-z)}=1.\end{equation}
  Since $k\geq 1$, by (\ref{eqn10}) we have $F_k^z\equiv 1\pmod 3$, $z\equiv 0\pmod 2.$
  Write $z=2z_{1},$  we have
  \begin{equation}\label{eqn11}\Big(\prod\limits_{i=0}^{k-1}F_i\Big)^x2^{r(x-z)}=(F_k^{z_{1}}-1)(F_k^{z_{1}}+1).\end{equation}
    Let $F_{k-1}=\prod\limits_{i=1}^tp_i^{\alpha_i}$ be the standard prime factorization of $F_{k-1}$ with $p_1<\cdots<p_t$. By the known Fermat primes, we know that there is the possibility of $t=1$. Moreover,
    \begin{equation}\label{eqn12}\textnormal{ ord}_{p_i}(2)=2^{k}, \quad i=1,\cdots,t.\end{equation}
    Noting that $\gcd(F_k^{z_{1}}-1,F_k^{z_{1}}+1)=2,$ we know that $p_t$ divide only one of $F_k^{z_{1}}-1$ and $F_k^{z_{1}}+1$.

{\bf Case 1.} $p_t\mid F_k^{z_{1}}-1$. Then $F_k^{z_{1}}-1\equiv 2^{z_1}-1\equiv 0\pmod {p_t}$. Noting that $\textnormal{ ord}_{p_t}(2)=2^{k}$, we have $z_1\equiv 0\pmod{2^{k}}$.
By (\ref{eqn12}) we have $$F_k^{z_{1}}-1\equiv 2^{z_1}-1\equiv 0\pmod {p_i}, \quad i=1,\cdots, t.$$
Since $\gcd(F_k^{z_{1}}-1,F_k^{z_{1}}+1)=2,$ by (\ref{eqn11}) we have $$F_k^{z_{1}}-1\equiv 2^{z_1}-1\equiv 0\pmod {p_i^{\alpha_ix}}, \quad i=1,\cdots, t.$$
Hence $F_{k-1}^x\mid F_k^{z_{1}}-1$.

{\bf Case 2.} $p_t\mid F_k^{z_{1}}+1$. Then $F_k^{z_{1}}+1\equiv 2^{z_1}+1\equiv 0\pmod {p_t}$. Noting that $\textnormal{ ord}_{p_t}(2)=2^{k}$, we have $2^{k-1}\mid z_1$, but $2^{k}\nmid z_1$.
By (\ref{eqn12}) we have $$2^{2z_1}-1=(2^{z_1}+1)(2^{z_1}-1)\equiv 0\pmod {p_i}, \quad i=1,\cdots, t.$$
Thus
$$F_k^{z_{1}}+1\equiv 2^{z_1}+1\equiv 0\pmod {p_i}, \quad i=1,\cdots, t.$$
Since $\gcd(F_k^{z_{1}}-1,F_k^{z_{1}}+1)=2,$ by (\ref{eqn11}) we have $$F_k^{z_{1}}+1\equiv 2^{z_1}+1\equiv 0\pmod {p_i^{\alpha_ix}}, \quad i=1,\cdots, t.$$
Hence $F_{k-1}^x\mid F_k^{z_{1}}+1$.

However, $$F_{k-1}^x=\Big(2^{2^{k-1}}+1\Big)^x>\Big(2^{2^{k-1}}+1\Big)^{2z_1}>F_k^{z_1}+1,$$ which is impossible.

This completes the proof of Lemma \ref{lem5}.
\end{proof}

\section{Proof of Theorem \ref{thm1}}

 By Lemma \ref{lem1} and Lemma \ref{lem5}, we may suppose that $n\geq2$ and the Eq.(\ref{eqn2}) has a solution $(x,y,z)$ with $x<z<y$. Then
 \begin{equation}\label{eqn13a}\Big(\prod_{i=0}^{k-1}F_i\Big)^{x}=n^{z-x}\Big(F_k^{z}-2^{(2^{k-1}+1)y}n^{y-z}\Big).\end{equation}
It is clear from \eqref{eqn13a} that
$$\gcd\Big(n,\prod\limits_{i=0}^{k-1}F_i\Big)>1.$$
Let
$\prod\limits_{i=0}^{k-1}F_i=\prod\limits_{i=1}^{t}p_i^{\alpha_i}$
  be the standard prime factorization of $\prod\limits_{i=0}^{k-1}F_i$ and write $n=\prod\limits_{\nu=1}^{s}p_{i_\nu}^{\beta_{i_\nu}},$
   where $\beta_{i_\nu}\geq1$, $\{i_1,\cdots,i_s\}\subseteq \{1,\cdots,t\}$.  Let $T=\{1,2,\cdots, t\}\setminus \{i_1,\cdots,i_s\}$.
If $T=\emptyset$, then let $P(k,n)=1$. If $T\neq\emptyset$, then let
$$P(k,n)=\prod\limits_{i\in T}p_i^{\alpha_i}.$$
 By (\ref{eqn13a}), we have
 \begin{equation}\label{eqn14a}P(k,n)^x=F_k^{z}-2^{(2^{k-1}+1)y}\prod\limits_{\nu=1}^{s}p_{i_\nu}^{\beta_{i_\nu}(y-z)}.\end{equation}
Since $y\ge 2$, it follows that
\begin{equation}\label{eqn4.3}P(k,n)^x\equiv 1\pmod{2^{2^k}}.\end{equation}
If $3\mid P(k,n)$, then $P(k,n)\equiv -1\pmod 4$. This implies
that $x$ is even. If $3\nmid P(k,n)$, then $P(k,n)\equiv 1\pmod
4$. Let $P(k,n)=1+2^vW$, $2\nmid W$. Then $v\ge 2$. Suppose that
$x$ is odd, then
$$P(k,n)^x=1+2^vW', \quad 2\nmid W'.$$
Thus $v\ge 2^k$ and $P(k,n)\ge F_k$, a contradiction with $$
P(k,n)<\prod\limits_{i=0}^{k-1}F_i=F_k-2. $$ Therefore, $x$ is
even. Write $x=2^uN$ with $2\nmid N$. Then $u\geq 1$.

{\bf Case 1.} $P(k,n)\equiv -1\pmod 4$. Let $P(k,n)=2^dM-1$ with $2\nmid M$. Then $d\geq 2$ and
$$P(k,n)^x=1+2^{u+d}V, \quad 2\nmid V.$$
By (\ref{eqn4.3}) we have $u+d\geq 2^k$.

Choose a $\nu\in\{1,\cdots,s\}$, let $p_{i_\nu}=2^rt+1$ with $r\geq 1$, $2\nmid t$. Then
$$2^{d+r-1}<(2^dM-1)(2^rt+1)=P(k,n)\cdot p_{i_\nu}\leq \prod\limits_{i=0}^{k-1}F_i=2^{2^k}-1.$$
 Thus $d+r\leq 2^k$. Hence $u\geq r$.
 By (\ref{eqn14a}) we have \begin{equation}\label{eqn3.4}P(k,n)^x\equiv 2^z\pmod{p_{i_\nu}}.\end{equation}
Noting that $p_{i_\nu}-1\mid 2^ut$, we have \begin{equation}\label{eqn3.5}2^{tz}\equiv P(k,n)^{2^utN}\equiv 1\pmod {p_{i_\nu}}.\end{equation}
Since $\textnormal{ord}_{p_{i_\nu}}(2)$ is even and $2\nmid t$, we have $z\equiv 0\pmod 2$.

{\bf Case 2.} $P(k,n)\equiv 1\pmod 4$. Let $P(k,n)=2^{d'}M'+1$ with $2\nmid M'$. Then $d'\geq 2$ and
$$P(k,n)^x=1+2^{u+d'}V', \quad 2\nmid V'.$$
By (\ref{eqn4.3}) we have $u+d'\geq 2^k$.

Choose a $\mu\in\{1,\cdots,s\}$, let $p_{i_\mu}=2^{r'}t'+1$ with $r'\geq 1$, $2\nmid t'$. Then
$$2^{d'+r'}<(2^{d'}M'+1)(2^{r'}{t'}+1)=P(k,n)\cdot p_{i_\mu}\leq \prod\limits_{i=0}^{k-1}F_i=2^{2^k}-1.$$
 Thus $d'+r'<2^k$. Hence $u>r'$.
 By (\ref{eqn14a}) we have \begin{equation}\label{eqn3.6}P(k,n)^x\equiv 2^z\pmod{p_{i_\mu}}.\end{equation}
Noting that $p_{i_\mu}-1\mid 2^ut'$, we have \begin{equation}\label{eqn3.7}2^{t'z}\equiv P(k,n)^{2^ut'N}\equiv 1\pmod {p_{i_\mu}}.\end{equation}
Since $\textnormal{ord}_{p_{i_\mu}}(2)$ is even and $2\nmid t'$, we have $z\equiv 0\pmod 2$.

Write $z=2z_{1}, x=2x_{1}$.
By (\ref{eqn14a}), we have \begin{equation}\label{eqn15T}2^{(2^{k-1}+1)y}\prod\limits_{\nu=1}^{s}p_{i_\nu}^{\beta_{i_\nu}(y-z)}=\Big(F_k^{z_{1}}-P(k,n)^{x_1}\Big)\Big(F_k^{z_{1}}+P(k,n)^{x_1}\Big).\end{equation}
Noting that $$\gcd\Big(F_k^{z_{1}}-P(k,n)^{x_1},F_k^{z_{1}}+P(k,n)^{x_1}\Big)=2,$$
we have \begin{equation}\label{eqn16T}2^{(2^{k-1}+1)y-1}\mid F_k^{z_{1}}-P(k,n)^{x_1},\quad 2\mid F_k^{z_{1}}+P(k,n)^{x_1},\end{equation}
 or \begin{equation}\label{eqn17T}2\mid F_k^{z_{1}}+P(k,n)^{x_1},\quad 2^{(2^{k-1}+1)y-1}\mid F_k^{z_{1}}-P(k,n)^{x_1}.\end{equation}
However, $$2^{(2^{k-1}+1)y-1}>2^{(2^{k-1}+1)2z_1}>(F_k+F_k-2)^{z_1}>F_k^{z_{1}}+P(k,n)^{x_1},$$
a contradiction.

This completes the proof of Theorem \ref{thm1}.

\section{Acknowledgment}  We sincerely thank Professor Yong-Gao Chen for his
valuable suggestions and useful discussions. We would like to thank the referee for his/her helpful comments.

\end{document}